 \newtheorem{thm}{Theorem}[section]
 \theoremstyle{definition}
 \theoremstyle{remark}
 \numberwithin{equation}{section}
\newtheorem{result}[thm]{Result}
\newcommand{\co}{\nabla}
\begin{document}

%-------------------------------------------------------------------------
% editorial commands: to be inserted by the editorial office
%
%\firstpage{1} \volume{228} \Copyrightyear{2004} \DOI{003-0001}
%
%
%\seriesextra{Just an add-on}
%\seriesextraline{This is the Concrete Title of this Book\br H.E. R and S.T.C. W, Eds.}
%
% for journals:
%
%\firstpage{1}
%\issuenumber{1}
%\Volumeandyear{1 (2004)}
%\Copyrightyear{2004}
%\DOI{003-xxxx-y}
%\Signet
%\commby{inhouse}
%\submitted{March 14, 2003}
%\received{March 16, 2000}
%\revised{June 1, 2000}
%\accepted{July 22, 2000}
%
%
%
%---------------------------------------------------------------------------
%Insert here the title, affiliations and abstract:
%

\title[On tangent sphere bundles with  contact pseudo-metric structures]
 {On tangent sphere bundles with  contact pseudo-metric structures}

%----------Author 1
\author[N. Ghaffarzadeh]{Narges Ghaffarzadeh}

\address{Department of Pure Mathematics,\\ Faculty of Mathematical Sciences,\\ University of Tabriz, Tabriz, Iran.}

\email{n.ghaffarzadeh@tabrizu.ac.ir}

%\thanks{This work was completed with the support of our\TeX-pert.}
%----------Author 2
\author[M. Faghfouri]{Morteza Faghfouri}
\address{Department of Pure Mathematics,\\ Faculty of Mathematical Sciences,\\ University of Tabriz, Tabriz, Iran.}
\email{faghfouri@tabrizu.ac.ir}
%----------classification, keywords, date
\subjclass{53C15, 53C50, 53C07}

\keywords{contact pseudo-metric structure, tangent sphere bundle, unit tangent sphere bundle, Sasaki pseudo-metric}

\date{January 1, 2004}
%----------additions
\dedicatory{To my boss}
%%% ----------------------------------------------------------------------

\begin{abstract}
In this paper, we introduce a contact pseudo-metric structure on a tangent sphere bundle $T_\varepsilon M$.  we prove that the tangent sphere bundle $T_{\varepsilon}M$ is $(\kappa, \mu)$-contact pseudo-metric manifold
 if and only if the manifold $M$ is of constant sectional curvature. Also, we prove that this structure on the tangent sphere bundle is $K$-contact iff the base manifold has constant curvature $\varepsilon$.
\end{abstract}

%%% ----------------------------------------------------------------------
\maketitle
%%% ----------------------------------------------------------------------
%\tableofcontents
\maketitle
 \section{Introduction}
In 1956, S. Sasaki \cite{sasaki:bundle} introduced a Riemannian metric on tangent bundle $TM$ and tangent sphere bundle $T_1M$ over a Riemannian manifold $M$. Thereafter, that metric was called the Sasaki metric. In 1962, Dombrowski \cite{Dombrowski} also showed at each $Z\in TM,\, TM_Z=HTM_Z\oplus VTM_Z$, where $HTM_Z$ and $VTM_Z$ orthogonal subspaces of dimension $n$, called horizontal and vertical distributions, respectively. He  defined an almost K\"{a}hlerian structure on $TM$ and proved that it is K\"{a}hlerian manifold if $M$ is flat. In the same year, Tachibana and Okumura \cite{tachibana:bundlecomplex} showed  that the tangent bundle
space $TM$ of any non-flat Riemannian space $M$  always admits an almost K\"{a}hlerian structure, which is not K\"{a}hlerian.  Tashiro \cite{Tashiro:contactbundle} introduced a contact metric structure on the unit tangent sphere bundle $T_1M$ and he proved that  contact metric structure on $T_1M$ is $K$-contact iff $M$ has constant curvature 1, in which case the structure is Sasakian.

 Kowalski \cite{Kowalski} computed  the curvature tensor of Sasaki metric. Thus, on $T_1M,\, R(X,Y)\xi$ can be computed by the formulas for the curvature of $TM$.

 In \cite{Blair:Contactmetricmanifoldssatisfyingnullitycondition}, Blair et al. introduced $(\kappa,\mu)$-contact Riemannian manifolds and proved that,  the tangent sphere bundle $T_{1}M$ is a $(\kappa, \mu)$-contact Riemannian manifold iff the base manifold $M$ is of constant sectional curvature $c$.

 Takahashi \cite{Takahashi:SasakianManifoldWithPseud} introduced contact pseudo-metric structures $(\eta,g)$, where $\eta$ is a contact one-form and $g$ is a pseudo-Riemannian metric associated to it, are a natural generalization of contact metric
structures. Recently, contact pseudo-metric manifolds  have been studied by Calvaruso and Perrone \cite{Calvaruso.Perrone:ContactPseudoMtricManifolds,Perrone:CurvatureKcontact} and authors of this paper \cite{GhaffarzadehFaghfouri} introduce and study $(\kappa, \mu)$-contact pseudo-metric manifolds.

 In this paper, we suppose that $(M,g)$ is pseudo-metric manifold and  define pseudo-metric on $TM$. Also, we introduce contact pseudo-metric structures $(\varphi,\xi,\eta,g_{cm})$  on $T_\varepsilon M$ and prove that \begin{align*}
\bar{R}(X,Y)\xi=c(4\varepsilon-(c+2))\{\eta(Y)X-\eta(X)Y\}-2\varepsilon c\{\eta(Y)hX-\eta(X)hY\}
\end{align*}
 if and only if the base manifold $M$ is of constant sectional curvature. That is, the tangent sphere bundle $T_{\varepsilon}M$ is a $(\kappa, \mu)$-contact pseudo-metric manifold iff the base manifold $M$ is of constant sectional curvature. Also, the contact pseudo-metric structure $(\varphi,\xi,\eta,g_{cm})$ on $T_{\varepsilon}M$ is $K$-contact if and only if the base manifold $(M,g)$ has constant curvature $\varepsilon$.

\section{Preliminaries}
Let $(M,g)$ be a pseudo-metric manifold and $\co$ the associated Levi-Civita connection and  $R=[\co,\co]-\co_{[,]}$ the curvature tensor. The tangent bundle of $M$, denoted by $TM$,  consists of pairs $(x,u)$, where $x\in M$ and $u\in T_xM$,( i.e.  $TM=\cup_{x\in M}T_xM$). The mapping $\pi:TM\to M, \pi(x, u)=x$ is the natural projection.\\
The tangent space $T_{(x,u)}TM$ splits into  the vertical subspace $VTM_{(x,u)}$ is given by
$VTM_{(x,u)}:=\ker\pi_{*}\vert_{(x,u)}$
and the horizontal subspace $HTM_{(x,u)}$ with respect to $\co$:
$$T_{(x,u)}TM=VTM_{(x,u)}\oplus HTM_{(x,u)}.$$
For every $X\in T_{x}M$, there is a unique vector $X^{h}\in HTM_{(x,u)}$, such that $\pi_{*}(X^{h})=X$. It is called the horizontal lift of $X$ to $(x,u)$. Also,  there is a unique vector $X^{v}\in VTM_{(x,u)}$, such that $X^{v}(df)=Xf$ for all  $f\in C^\infty(M)$. $X^{v}$ is called the vertical lift of $X$ to $(x,u)$. The maps
$X\mapsto X^{h}$  between $T_{x}M$ and $HTM_{(x,u)}$, and   $X\mapsto X^{v}$ between $T_{x}M$ and $VTM_{(x,u)}$ are isomorphisms.  Hence, every tangent vector $\bar{Z}\in T_{(x,u)}TM$ can be decomposed $\bar{Z}=X^{h}+Y^{v}$ for uniquely determined vectors $X,Y\in T_{x}M$. The horizontal ( respectively, vertical) lift of a vector field $X$ on $M$ to $TM$ is the vector field $X^{h}$ (respectively, $X^{v}$ ) on $M$, whose value at the point $(x,u)$ is the horizontal (respectively, vertical) lift of $X_{x}$ to $(x,u)$.\\
A system of local coordinate $(x^{1},\ldots,x^{n})$ on an open subset $U$ of $M$ induces on $\pi^{-1}(U)$ of $TM$ a system of local coordinate $(\bar{x}^{1},\ldots,\bar{x}^{n};u^{1},\ldots,u^{n})$ as follows:
%\begin{align*}
$\bar{x}^{i}(x,u)=(x^{i}\circ\pi)(x,u)=x^{i}(x), u^{i}(x,u)=dx^{i}(u)=ux^{i}$
%\end{align*}
for $i=1,\ldots,n$ and $(x,u)\in\pi^{-1}(U)$. With respect to the induced  local coordinate system, the horizontal and vertical lifts of a vector field
$X= X^{i}\frac{\partial}{\partial x^{i}}$
on $U$ are given by
\begin{gather}
X^{h}= (X^{i}\circ\pi)\frac{\partial}{\partial\bar{x}^{i}}-u^{b}((X^{a}\Gamma^{i}_{ab})\circ\pi)\frac{\partial}{\partial u^{i}},\label{0000}\\
X^{v}=(X^{i}\circ\pi)\frac{\partial}{\partial u^{i}},\label{0001}
\end{gather}
where $\Gamma^{i}_{jk}$ are the local components of $\co$, i.e., $\co_{\frac{\partial}{\partial x^{j}}}\frac{\partial}{\partial x^{k}}=\Gamma^{i}_{jk}\frac{\partial}{\partial x^{i}}$. The span of the horizontal lifts at $t\in TM$ is called the horizontal subspace
of $T_{t}TM$.
For all $t\in TM$, the connection map $\mathcal{K}:TTM\to TM$ is given by
$\mathcal{K}X_t^{h}=0 $ and $ \mathcal{K}X_{t}^{v}=X_{\pi(t)}$ \cite{Dombrowski}.

From (\ref{0000}) and (\ref{0001}), one can easily calculate the brackets of vertical and horizontal lifts:
\begin{gather}
[X^{h},Y^{h}]=[X,Y]^{h}-v\{R(X,Y)u\},\label{0002}\\
[X^{h},Y^{v}]=(\co_{X}Y)^{v},\label{0003}\\
[X^{v},Y^{v}]=0,\label{0004}
\end{gather}
for all $X,Y\in\Gamma(TM)$. We use some notation, due to M. Sekizawa (\cite{Sekizawa}). Let $T$ be a tensor field of type $(1,s)$ on $M$ and $X_{1},\ldots,X_{s-1}\in\Gamma(TM)$, the vertical vector field $v\{T(X_{1},\ldots,u,\ldots,X_{s-1})\}$ on $\pi^{-1}(U)$ is given by
$$v\{T(X_{1},\ldots,u,\ldots,X_{s-1})\}:= u^{a}(T(X_{1},\ldots,\frac{\partial}{\partial x^{a}},\ldots,X_{s-1}))^{v}.$$
Analogously, one defines  the horizontal vector field $h\{T(X_{1},\ldots,u,\ldots,X_{s-1})\}$ and $h\{T(X_{1},\ldots,u,\ldots,u,\ldots,X_{s-2})\}$ and the vertical vector field $v\{T(X_{1},\ldots$ $,u,\ldots,u,\ldots,X_{s-2})\}$.  Note that these vector fields do not depend on the choice of coordinates on $U$. Let $(M,g)$ be a pseudo-metric manifold. On the tangent bundle $TM$, we can define a  pseudo-metric $Tg$ to be
\begin{align}\label{0005}
Tg(X^{h},Y^{h})=Tg(X^{v},Y^{v})=g(X,Y)\circ\pi,\quad Tg(X^{h},Y^{v})=0
\end{align}
for all $X,Y\in\Gamma(TM)$. We call it Sasaki pseudo-metric.
  According \eqref{0005}, If $\{E_1,\ldots,E_n\}$ is an orthonormal frame field on $U$ then $\{E_1^v,\ldots,E_n^v,E_1^h,\ldots,E_n^h\}$ is an orthonormal frame field on $\pi^{-1}(U)$. So, we have the following:

  \begin{pro}
If the index of $g$ is $\nu$ then the index of the Sasaki pseudo-metric $Tg$ is $2\nu$.
   \end{pro}

  Let $\tilde{\co}$ be  the Levi-Civita connection of $Tg$. It is easy to check that for $X,Y\in\Gamma(TM)$ and $(x,u)\in TM$(see \cite{Kowalski} for more details):
\begin{align}\label{0006}
\begin{split}
(\tilde{\co}_{X^{v}} Y^{v})&=0,\qquad (\tilde{\co}_{X^{v}} Y^{h})=\frac{1}{2}h\{R(u,X)Y\},\\
(\tilde{\co}_{X^{h}} Y^{v})&=({\co}_{X} Y)^{v}+\frac{1}{2}h\{R(u,Y)X\},\\
(\tilde{\co}_{X^{h}} Y^{h})&=({\co}_{X} Y)^{h}-\frac{1}{2}v\{R(X,Y)u\}.
\end{split}
\end{align}
\section{The curvature of the unit tangent sphere bundle with pseudo-metric}
Let $(TM,Tg)$ be the tangent bundle of $(M,g)$ endowed with its Sasaki pseudo-metric. We consider the hypersurface $T_{\varepsilon}M=\{(x,u)\in TM| g_{x}(u,u)=\varepsilon\}$, which we call  the unit tangent sphere bundle.  A unit normal vector field $N$ on $T_{x}M$ is the (vertical) vector field $N= u^{i}\frac{\partial}{\partial u^{i}}=u^{i}(\frac{\partial}{\partial x^{i}})^{v}$. $N$ is independent of the choice of local coordinates and it is defined globally on $TM$.
We introduce some more notation. If $X\in T_{x}M$, we define the tangential lift of $X$ to $(x,u)\in T_{\varepsilon}M$ by
\begin{align}\label{0007}
X^{t}_{(x,u)}=X^{v}_{(x,u)}-\varepsilon g(X,u)N_{(x,u)}.
\end{align}
Clearly, the tangent space to $T_{\varepsilon}M$ at $(x,u)$ is then spanned by vectors of the form $X^{h}$ and $X^{t}$, where $X\in T_{x}M$. Note that $u^{t}_{(x,u)}=0$. The tangential lift of a vector field $X$ on $M$ to $T_{\varepsilon}M$ is the vertical vector field $X^{t}$ on $T_{\varepsilon}M$, whose value at the point $(x,u)\in T_{\varepsilon}M$ is the tangential lift of $X_{x}$ to $(x,u)$. For a tensor field $T$ of type $(1,s)$ on $M$ and $X_{1},\ldots,X_{s-1}\in\Gamma(TM)$, we define the vertical vector fields $t\{T(X_{1},\ldots,u,\ldots,X_{s-1})\}$
and $t\{T(X_{1},\ldots,u,\ldots,u,\ldots,X_{s-2})\}$ on $T_{\varepsilon}M$ in the obvious way.\\
In what follows, we will give explicit expressions for the brackets of vector fields on $T_{\varepsilon}M$ involving tangential lifts, the Levi-Civita connection and the associated curvature tensor of the induced metric $\bar{g}$ on $T_{\varepsilon}M$.\\
First, for the brackets of vector fields on $T_{\varepsilon}M$ involving tangential lifts, we obtain
\begin{gather}
[X^{h},Y^{t}]=(\co_{X}Y)^{t},\label{0008}\\
[X^{t},Y^{t}]=\varepsilon g(X,u)Y^{t}-\varepsilon g(Y,u)X^{t}.\label{0009}
\end{gather}
Next, we denote by $\bar{g}$  the pseudo-metric induced on $T_{\varepsilon}M$ from  $Tg$ on $TM$.
\begin{pro}
The Levi-Civita connection $\bar{\co}$ of $(T_{\varepsilon}M,\bar{g})$ is described completely by
\begin{align}\label{0010}
\begin{split}
\bar{\co}_{X^{t}} Y^{t}&=-\varepsilon g(Y,u)X^{t},\\
\bar{\co}_{X^{t}} Y^{h}&=\frac{1}{2}h\{R(u,X)Y\},\\
\bar{\co}_{X^{h}} Y^{t}&=({\co}_{X} Y)^{t}+\frac{1}{2}h\{R(u,Y)X\},\\
\bar{\co}_{X^{h}} Y^{h}&=({\co}_{X} Y)^{h}-\frac{1}{2}t\{R(X,Y)u\}
\end{split}
\end{align}
for all vector fields $X$ and $Y$ on $M$.
\end{pro}
\begin{proof}
This is obtained by an easy calculation using \eqref{0006} and the following equation
$$\bar{\co}_{\bar{A}}\bar{B}=\tilde{\co}_{\bar{A}}\bar{B}-\varepsilon Tg(\bar{\co}_{\bar{A}}\bar{B},N)N,$$
for vector fields $\bar{A},\bar{B}$ tangent to $T_{\varepsilon}M$.
\end{proof}
\begin{pro}
The curvature tensor $\bar{R}$ of $(T_{\varepsilon}M,\bar{g})$ is described completely by
\begin{gather}
\bar{R}(X^{t},Y^{t})Z^{t}=\varepsilon\{-\bar{g}(X^{t},Z^{t})Y^{t}+\bar{g}(Z^{t},Y^{t})X^{t}\},\label{0011}\\
\bar{R}(X^{t},Y^{t})Z^{h}=(R(X,Y)Z)^{h}-\varepsilon\{g(Y,u)h(R(X,u)Z)+g(X,u)h(R(u,Y)Z)\}\nonumber\\
\quad\quad\quad\quad\quad\quad\quad+\frac{1}{4}h\{[R(u,X),R(u,Y)]Z\},\label{0012}\\
\bar{R}(X^{h},Y^{t})Z^{t}=-\frac{1}{2}(R(Y,Z)X)^{h}+\frac{\varepsilon}{2}\{g(Y,u)h(R(u,Z)X)+g(Z,u)h(R(Y,u)X)\}\nonumber\\
\quad\quad\quad\quad\quad\quad\quad-\frac{1}{4}h\{R(u,Y)R(u,Z)X\},\label{0013}\\
\bar{R}(X^{h},Y^{t})Z^{h}=\frac{1}{2}(R(X,Z)Y)^{t}-\frac{\varepsilon}{2}g(Y,u)t\{R(X,Z)u\}\nonumber\\
\quad\quad\quad\quad\quad\quad\quad-\frac{1}{4}t\{R(X,R(u,Y)Z)u\}+\frac{1}{2}h\{(\co_{X}R)(u,Y)Z\},\label{0014}\\
\bar{R}(X^{h},Y^{h})Z^{t}=(R(X,Y)Z)^{t}-\varepsilon g(Z,u)t\{R(X,Y)u\}\nonumber\\
\quad\quad\quad\quad\quad\quad\quad+\frac{1}{4}t\{R(Y,R(u,Z)X)u-R(X,R(u,Z)Y)u\}\nonumber\\
\quad\quad\quad\quad\quad\quad\quad+\frac{1}{2}h\{(\co_{X}R)(u,Z)Y-(\co_{Y}R)(u,Z)X\},\label{0015}\\
\bar{R}(X^{h},Y^{h})Z^{h}=(R(X,Y)Z)^{h}+\frac{1}{2}h\{R(u,R(X,Y)u)Z\}\nonumber\\
\quad\quad\quad\quad\quad\quad\quad-\frac{1}{4}h\{R(u,R(Y,Z)u)X-R(u,R(X,Z)u)Y\}\nonumber\\
\quad\quad\quad\quad\quad\quad\quad+\frac{1}{2}t\{(\co_{Z}R)(X,Y)u\}\label{0016}
\end{gather}
for all vector fields $X,Y$ and $Z$ on $M$.
\end{pro}
\begin{proof}
The proof is made by using the following equation and  equation (\ref{0010}) for the covariant derivative, (\ref{0002}), (\ref{0008}) and (\ref{0009}) for the brackets are explicitly calculated.
%The proof goes by explicit calculation using
$$\bar{R}(\bar{A},\bar{B})\bar{C}=\bar{\co}_{\bar{A}}\bar{\co}_{\bar{B}}\bar{C}-\bar{\co}_{\bar{B}}\bar{\co}_{\bar{A}}\bar{C}-\bar{\co}_{[\bar{A},\bar{B}]}\bar{C}.$$
%the expressions (\ref{0010}) for the covariant derivative and the formulae (\ref{0002}), (\ref{0008}) and (\ref{0009}) for the brackets.
\end{proof}
\section{The contact pseudo-metric structure of the unit tangent sphere bundle}
First, we give some basic facts on contact pseudo-metric structures. A pseudo-Riemannian manifold $(M^{2n+1},g)$ has a contact pseudo-metric structure if it admits a vector field $\xi$, a one-form $\eta$ and a $(1,1)$-tensor field $\varphi$ satisfying
\begin{align}\label{0017}
\begin{split}
&\eta (\xi)= 1,\\
&\varphi^2(X)=-X+\eta(X)\xi,\\
&g(\varphi X,\varphi Y)=g(X,Y)-\varepsilon\eta(X)\eta(Y),\\
&d\eta(X,Y) =g(X,\varphi Y),
\end{split}
\end{align}
where $\varepsilon=\pm1$ and  $X,Y\in\Gamma(TM)$.
In this case, $(M,\varphi,\xi,\eta,g)$ is called a contact pseudo-metric manifold. In particular, the above conditions imply that the characteristic curves, i.e., the integral curves of the characteristic vector field $\xi$, are geodesics.\\
If $\xi$ is in addition a Killing vector field with respect to $g$, then the manifold is said to be a $K$-contact (pseudo-metric) manifold. Another characterizing property of such contact pseudo-metric manifolds is the following:\\
 geodesics which are orthogonal to $\xi$ at one point, always remain orthogonal to $\xi$. This yields a second special class of geodesics, the $\varphi$-geodesics.\\
Next, if $(M^{2n+1},\varphi,\xi,\eta,g)$ is a contact pseudo-metric manifold satisfying the additional condition
$N_{\varphi}(X,Y)+2d\eta(X,Y)\xi=0$ is said to be Sasakian,
where $N_{\varphi}(X,Y)=\varphi^2[X,Y]+[\varphi X,\varphi Y]-\varphi[\varphi X,Y]-\varphi[X,\varphi Y]$ is the Nijenhuis torsion tensor of $\varphi$. Moreover, an almost contact pseudo-metric manifold $(M^{2n+1},\varphi,\xi,\eta,g)$ is a Sasakian manifold if and only if $(\co_X\varphi)Y=g(X,Y)-\varepsilon\eta(Y)X$.
 In particular, one can show that the characteristic vector field $\xi$ is a Killing vector field. Hence, a Sasakian manifold is also a $K$-contact manifold(see \cite{Calvaruso.Perrone:ContactPseudoMtricManifolds,Perrone:CurvatureKcontact} for more details). If a contact pseudo-metric manifold satisfying $R(X,Y)\xi= \varepsilon\kappa(\eta(Y )X -\eta(X)Y ) +\varepsilon \mu(\eta(Y )hX -\eta(X)hY )$, we call  $(\kappa, \mu)$-contact pseudo-metric manifold, where $(\kappa ,\mu)\in \mathbb{R}^2$. $(\kappa, \mu)$-contact pseudo-metric manifold is Sasakian if and only if $\kappa=\varepsilon$(see \cite{GhaffarzadehFaghfouri} for more details).

Take now an arbitrary pseudo-metric manifold $(M,g)$. One can easily define an almost complex structure $J$ on $TM$ in the following way:
\begin{align}\label{0018}
JX^{h}=X^{v},\quad JX^{v}=-X^{h}
\end{align}
for all vector fields $X$ on $M$.
 From (\ref{0002}), (\ref{0003}) and (\ref{0004}), we have the almost complex structure $J$ is integrable if and only if $(M,g)$ is flat.
 From the definition (\ref{0005}) of the pseudo-metric $Tg$ on $TM$, it follows immediately that $(TM,Tg,J)$ is almost Hermitian. Moreover, $J$ defines an almost K\"{a}hlerian structure. It is a K\"{a}hler manifold only when $(M,g)$ is flat\cite{Dombrowski}.\\
Next, we consider the unit tangent sphere bundle $(T_{\varepsilon}M,\bar{g})$, which is isometrically embedded as a hypersurface in $(TM,Tg)$ with unit normal field $N$. Using the almost complex structure $J$ on $TM$, we define a unit vector field $\xi'$, a one-form $\eta'$ and a $(1,1)$-tensor field $\varphi'$ on $T_{\varepsilon}M$ by
\begin{align}\label{0019}
\xi'=-JN,\quad JX=\varphi' X+\eta'(X)N.
\end{align}
In local coordinates, $\xi'$, $\eta'$ and $\varphi'$ are described by
\begin{align}\label{0020}
\begin{split}
&\xi'=u^{i}(\frac{\partial}{\partial x^{i}})^{h},\\
&\eta'(X^{t})=0,\quad\eta'(X^{h})=\varepsilon g(X,u),\\
&\varphi'(X^{t})=-X^{h}+\varepsilon g(X,u)\xi',\quad\varphi'(X^{h})=X^{t},
\end{split}
\end{align} where  $X,Y\in\Gamma(TM)$.
It is easily checked that these tensors satisfy the conditions (\ref{0017}) excepts or the last one, where we find $\bar{g}(X,\varphi' Y)=2 d\eta'(X,Y)$. So strictly speaking, $(\varphi',\xi',\eta',\bar{g})$ is not a contact pseudo-metric structure. Of course, the difficulty is easily rectified and
$$\eta=\frac{1}{2}\eta',\quad\xi=2\xi',\quad\varphi=\varphi',\quad g_{cm}=\frac{1}{4}\bar{g}$$
is taken as the standard contact pseudo-metric structure on $T_{\varepsilon}M$. In local coordinates,
with respect to induce the  local coordinates $(x^{i},u^{i})$ on $TM$, the characteristic vector field is given by
$$\xi=2u^{i}(\frac{\partial}{\partial x^{i}})^{h},$$
the vector field $u^{i}(\frac{\partial}{\partial x^{i}})^{h}$ is the well-known geodesic flow on $T_{\varepsilon}M$. Before beginning our theorems, we explicitly obtain  the covariant derivatives of $\xi$ and $\varphi$. For a horizontal tangent vector field, we may  use a horizontal lift again. Then
$$\bar{\co}_{X^{h}}\xi=\tilde{\co}_{X^{h}}\xi=-v\{R(X,u)u\}$$
and hence for any horizontal vector $X^{h}$ at $(x,u)\in T_{\varepsilon}M$, we have
$$\bar{\co}_{X^{h}}\xi=-v\{R(X,u)u\}=-t\{R(X,u)u\}.$$
For a vertical vector field $X^{v}$ tangent to $T_{\varepsilon}M$, we have
$$\bar{\co}_{X^{v}}\xi=\tilde{\co}_{X^{v}}\xi=-2\varphi X^{v}-h\{R(X,u)u\}.$$
Since $J(\frac{\partial}{\partial x^{i}})^{h}=(\frac{\partial}{\partial x^{i}})^{v}$, or in terms of tangential lifts of a vector $X$ on $M$,
$$\bar{\co}_{X^{t}}\xi=-2\varphi X^{t}-h\{R(X,u)u\}.$$
Comparing with $\bar{\co}_{X}\xi=-\varepsilon\varphi X-\varphi hX$  on $T_{\varepsilon}M$ for a vertical vector $V$ and a horizontal vector $X$ orthogonal to $\xi$, $hV$ and $hX$ are given by
\begin{align}\label{085}
hV=(2-\varepsilon)V-v\{R(\mathcal{K}V,u)u\}\quad\text{and}\quad hX=-\varepsilon X+h\{R(\pi_{*}X,u)u\}.
\end{align}
Also for any tangent vector fields $X$ and $Y$, we have
\begin{align*}
(\bar{\co}_{X}\varphi)Y=&\tilde{\co}_{X}JY-(\bar{\co}_{X}\eta')(Y)N+\eta'(Y)AX\\
&-\varepsilon\bar{g}(X,A\varphi Y)N-J(\tilde{\co}_{X}Y)-\varepsilon\bar{g}(X,AY)\xi'.
\end{align*}
We present two computations, one done in each manner.\\
%Now, we consider the two cases.\\
As before, for $X,Y$ horizontal vector fields, we suppose that they are horizontal lifts, and we have
$$(\bar{\co}_{X^{h}}\varphi)Y^{h}=\frac{1}{2}h\{R(u,X)Y\},$$
where we  used the first Bianchi identity.\\
For $Y^{v}=Y^{i}\frac{\partial}{\partial u^{i}}$ a vertical vector field tangent to $T_{\varepsilon}M$ and $X^{h}$ a horizontal tangent vector, we have
$$(\bar{\co}_{X^{h}}\varphi)Y^{v}=\frac{1}{2}t\{R(X,u)Y\},$$
where we  used
$$(\bar{\co}_{X^{h}}\eta')(Y^{v})=\varepsilon\bar{g}(Y^{v},\bar{\co}_{X^{h}}\xi')=-\frac{\varepsilon}{2}g(Y,R(X,u)u)=\frac{\varepsilon}{2}Tg(N,(R(X,u)Y)^{v}).$$
Similarly, we obtain
$$(\bar{\co}_{X^{v}}\varphi)Y^{h}=t\{R(X,u)Y\}-2\eta(Y^{h})X^{v},$$
$$(\bar{\co}_{X^{v}}\varphi)Y^{v}=\frac{1}{2}h\{R(X,u)Y\}+2\varepsilon g_{cm}(X,Y)\xi.$$
\begin{theorem}\label{th:R}
Let $(\varphi,\xi,\eta,g_{cm})$ be a contact pseudo-metric structure on the tangent sphere bundle $T_{\varepsilon}M$. Then
\begin{align}\label{R}
\bar{R}(X,Y)\xi=c(4\varepsilon-(c+2))\{\eta(Y)X-\eta(X)Y\}-2\varepsilon c\{\eta(Y)hX-\eta(X)hY\}
\end{align}
 if and only if the base manifold $M$ is of constant sectional curvature $c$.
\end{theorem}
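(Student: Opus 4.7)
The plan is to verify (\ref{R}) by decomposing $X$ and $Y$ into their horizontal and tangential components. Since both sides of (\ref{R}) are $\bar g$-tensorial in $X, Y$, and every vector tangent to $T_{\varepsilon}M$ is of the form $X^h+Y^t$ for $X,Y\in T_xM$, it suffices to treat the three cases $(X^t,Y^t)$, $(X^h,Y^t)$, $(X^h,Y^h)$ (the mixed one $(X^t,Y^h)$ follows by antisymmetry). At a point $(x,u)\in T_{\varepsilon}M$ we have $\xi_{(x,u)}=2u^h_{(x,u)}$, so in each case the left-hand side of (\ref{R}) can be read off from formulas (\ref{0012}), (\ref{0014}), (\ref{0016}) of Proposition 3.2 by substituting $Z=2u$, with $u$ viewed pointwise as an element of $T_xM$.

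For the right-hand side of (\ref{R}), I would use $\eta(X^t)=0$ and $\eta(X^h)=\tfrac{\varepsilon}{2}g(X,u)$, together with the expressions (\ref{085}) for the structure operator $h$ on horizontal and tangential vectors orthogonal to $\xi$. After these substitutions each of the three cases becomes an equation in $T_xM$-valued expressions built from $R$, the Jacobi operator $R(\cdot,u)u$, and possibly $\co R$, split into its horizontal and tangential parts.

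For the ``if'' direction, assume $R(X,Y)Z=c\{g(Y,Z)X-g(X,Z)Y\}$. Then $\co R=0$, $R(X,u)u=c(\varepsilon X-g(X,u)u)$, and in the $(X^t,Y^t)$ case the commutator $[R(u,X),R(u,Y)]u$ appearing in (\ref{0012}) vanishes by a direct computation using the formula for $R$. A substitution in each case, collecting horizontal and tangential parts separately, yields exactly the coefficients $c(4\varepsilon-(c+2))$ of $\eta(Y)X-\eta(X)Y$ and $-2\varepsilon c$ of $\eta(Y)hX-\eta(X)hY$ predicted by (\ref{R}).

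The main obstacle is the converse. The sharpest information comes from the $(X^t,Y^t)$ case, where both $\eta(X^t)$ and $\eta(Y^t)$ vanish so the right-hand side of (\ref{R}) is zero. Setting the expression (\ref{0012}) (with $Z=2u$) equal to zero, then dropping the common horizontal-lift projection, produces the pointwise identity
\[
4R(X,Y)u-4\varepsilon g(Y,u)R(X,u)u+4\varepsilon g(X,u)R(Y,u)u+[R(u,X),R(u,Y)]u=0
\]
for all $X,Y,u\in T_xM$. A polarization in $u$, combined with the algebraic symmetries of $R$, forces the Jacobi operator $R(\cdot,u)u$ to act as a scalar multiple $c(x,u)$ of the identity on $u^\perp$, hence $R(X,Y)Z=c(x)\{g(Y,Z)X-g(X,Z)Y\}$ pointwise. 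The $(X^h,Y^h)$ case then contains the term $\tfrac12 t\{(\co_Z R)(X,Y)u\}$ from (\ref{0016}) which, having no counterpart on the right-hand side of (\ref{R}), must vanish; this forces $\co R=0$, so $c$ is a genuine constant by the second Bianchi identity (equivalently, by Schur's theorem when $\dim M\ge 3$).
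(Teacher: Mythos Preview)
Your ``if'' direction is sound and matches the paper's approach: both plug the constant-curvature expression for $R$ into the curvature formulas (\ref{0011})--(\ref{0016}) and read off the coefficients.

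Your converse, however, diverges from the paper and contains a real gap. The paper does \emph{not} use the $(X^t,Y^t)$ case. Instead it evaluates $\bar R(\cdot,\xi)\xi$ on a vertical vector $V$ and on a horizontal vector $X$ orthogonal to $\xi$, comparing the assumed $(\kappa,\mu)$-formula (with the explicit values $\kappa=c(4-\varepsilon(c+2))$, $\mu=-2c$) against the curvature formulas. This produces two polynomial identities for the Jacobi operator $\psi_u:=R(\cdot,u)u$ on $u^\perp$:
\[
\psi_u^{2}+\varepsilon\mu\,\psi_u-\varepsilon(\kappa+(2-\varepsilon)\mu)I=0,\qquad
3\psi_u^{2}+(\varepsilon\mu-4)\psi_u+(\varepsilon\kappa-\mu)I=0.
\]
Eliminating $\psi_u^2$ between them yields a \emph{linear} relation forcing $\psi_u=\varepsilon c\,I$ directly, so $M$ has constant curvature $c$; no polarization and no appeal to $\nabla R=0$ or Schur are needed.

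By contrast, your route extracts from the $(X^t,Y^t)$ case only the relation
\[
4R(X,Y)u+[R(u,X),R(u,Y)]u=0\qquad (X,Y\perp u,\ g(u,u)=\varepsilon),
\]
and then asserts that ``polarization in $u$'' forces $\psi_u$ to be scalar. This step is not justified: the identity is not homogeneous in $u$ (degree $1$ versus degree $3$ terms), it involves the full curvature operators $R(u,X)$ rather than $\psi_u$ alone, and in the pseudo-Riemannian setting $\psi_u$ need not be diagonalizable. Moreover, since the right-hand side of (\ref{R}) vanishes identically on tangential pairs regardless of $c$, your identity carries no information about the specific constant $c$ appearing in the hypothesis, so even if it did imply pointwise constant curvature you would still have to match the value. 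The paper avoids all of this by working with $\bar R(\cdot,\xi)\xi$, where the right-hand side is nonzero and encodes $\kappa,\mu$ explicitly.
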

\begin{proof}
Assume that the manifold $M$ is a pseudo-metric manifold of constant curvature $c$. Then from equations  (\ref{0011}-\ref{0016}), for $X,Y$ orthogonal to $\xi$, we have  $\bar{R}(X, Y)\xi=0$  and for a vertical vector $V$, we get $\bar{R}(V,\xi)\xi=c^{2}V$ and also, for a horizontal vector $X$ orthogonal to $\xi$, we obtain $\bar{R}(X,\xi)\xi=(4\varepsilon c- 3c^{2})X$. Moreover, from equations (\ref{085}), $hV=(2-\varepsilon (1+c))V$ and $hX= \varepsilon (c- 1)X$. Thus for all $X,Y$ on $T_{\varepsilon}M$, the curvature tensor on $T_{\varepsilon}M$ satisfies
$$\bar{R}(X,Y)\xi=c(4\varepsilon-(c+2))\{\eta(Y)X-\eta(X)Y\}-2\varepsilon c\{\eta(Y)hX-\eta(X)hY\}.$$
Conversely, if the contact pseudo-metric structure on $T_{\varepsilon}M$ satisfies the condition %that belongs to the $(\kappa, \mu)$-nullity distribution,
$$\bar{R}(X,Y)\xi=\varepsilon\kappa\{\eta(Y)X-\eta(X)Y\}+\varepsilon \mu\{\eta(Y)hX-\eta(X)hY\},$$
where $\kappa=c(4-\varepsilon(c+2)), \mu= -2c $ then
\begin{align}\label{086}
\bar{R}(X,\xi)\xi=\varepsilon\kappa X+\varepsilon\mu hX,
\end{align}
for any $X$ orthogonal to $\xi$. Now, for a vector $u$ on $M$, that $g(u,u)=\varepsilon$ define a symmetric operator
$\psi_{u}:\langle u\rangle^{\perp}\to \langle u\rangle^{\perp}$
 by $\psi_{u}X= R(X, u)u$. By placing the equation (\ref{085}) in (\ref{086}), we get
 \begin{align}\label{1}
 \bar{R}(V,\xi)\xi=\varepsilon(\kappa+\mu(2-\varepsilon))V-\varepsilon\mu\, v\{\psi_{u}\mathcal{K}V\}.
 \end{align}
 Also using equations (\ref{0011}-\ref{0016}), we have
 \begin{align}\label{2}
 \bar{R}(V,\xi)\xi=-v(R(R(u,\mathcal{K}V)u,u)u)=v\{\psi^{2}_{u}\mathcal{K}V\}.
  \end{align}
From a comparison of equations \eqref{1} and \eqref{2},  the operator $\psi_{u}$ satisfies the equation
\begin{align}\label{1.1}\psi^{2}_{u}+\varepsilon\mu \psi_{u}-\varepsilon(\kappa+(2-\varepsilon)\mu)I=0.\end{align}
In a similar way, for a horizontal $X$ orthogonal to $\xi$,
$$\bar{R}(X,\xi)\xi=(\varepsilon\kappa-\mu)X+\varepsilon\mu\, h(\psi_{u}\pi_{*}X),$$
and, from equations  \eqref{0011}-\eqref{0016}, we get
$$\bar{R}(X,\xi)\xi=h(4\psi_{u}\pi_{*}X-3\psi^{2}_{u}\pi_{*}X),$$
giving
\begin{align}\label{1.2}3\psi^{2}_{u}+(\varepsilon\mu-4)\psi_{u}+(\varepsilon\kappa-\mu)I=0.\end{align}
From  equations  \eqref{1.1} and \eqref{1.2}, the eigenvalues $a$ of $\psi_{u}$ satisfy the two quadratic equations
$$a^{2}+\varepsilon\mu a-(\varepsilon\kappa+(2\varepsilon-1)\mu)=0,\quad a^{2}+\frac{\varepsilon\mu-4}{3}a+\frac{\varepsilon\kappa-\mu}{3}=0.$$
Thus, the common root of the above quadratic equations is $a=\varepsilon c$.
%$$(a-\varepsilon c)^2=0,\qquad 3a^2-2(\varepsilon c+2)a+4\varepsilon c-c^2=0$$
%$$a=\varepsilon c,\qquad a=\dfrac{\varepsilon c+2\pm 2(1-\varepsilon c)}{3}=\varepsilon c ,\dfrac{4-\varepsilon c}{3} $$
%If $\psi_{u}$ had two eigenvalues, these quadratics imply that $\mu=-2\varepsilon$ and $\kappa=3\varepsilon-2$. Because,
%\begin{align*}
%&\varepsilon\mu=\dfrac{\varepsilon\kappa-\mu}{3}, \text{ and } -(\varepsilon\kappa+(2\varepsilon-1)\mu)=\dfrac{\varepsilon\kappa-\mu}{3}.
%\end{align*}
Hence from $\psi_{u}X= R(X, u)u=\varepsilon cX$ and $g(u,u)=\varepsilon$, $M$ is of constant curvature $c$.
\end{proof}
We can now rephrase Theorem \ref{th:R} as the following result.
\begin{result}\label{4.2}
 The tangent sphere bundle $T_{\varepsilon}M$ is $(\kappa, \mu)$-contact pseudo-metric manifold
 if and only if the base manifold $M$ is of constant sectional curvature $c$ and $\kappa=c(4-\varepsilon(c+2)), \mu= -2c $.
\end{result}
We now have the following theorem about the $K$-contact structure.
\begin{theorem}
The contact pseudo-metric structure $(\varphi,\xi,\eta,g_{cm})$ on $T_{\varepsilon}M$ is $K$-contact if and only if the base manifold $(M,g)$ has constant curvature $\varepsilon$.
\end{theorem}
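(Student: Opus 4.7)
The plan is to use the standard characterization that a contact pseudo-metric manifold is $K$-contact precisely when the characteristic vector field $\xi$ is Killing, and then to translate the Killing equation into a curvature condition on the base using the two covariant-derivative formulas $\bar{\co}_{X^{h}}\xi = -t\{R(X,u)u\}$ and $\bar{\co}_{X^{t}}\xi = -2\varphi X^{t} - h\{R(X,u)u\}$ already derived in the paragraphs preceding Theorem~\ref{th:R}. Since $g_{cm} = \frac{1}{4}\bar{g}$ is a constant rescaling, the two metrics share their Levi-Civita connection, and it is enough to verify that $\xi$ is Killing with respect to $\bar{g}$.

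Splitting test vectors at $(x,u) \in T_{\varepsilon}M$ into horizontal and tangential lifts yields three cases. For $(X^{h}, Y^{h})$ the symmetrized Killing expression $\bar{g}(\bar{\co}_{X^{h}}\xi, Y^{h}) + \bar{g}(X^{h}, \bar{\co}_{Y^{h}}\xi)$ is automatically zero, since $\bar{\co}_{X^{h}}\xi$ is tangential and hence $\bar{g}$-orthogonal to every horizontal vector. For $(X^{t}, Y^{t})$ the relation $\varphi X^{t} = -X^{h} + \varepsilon g(X,u)\xi'$ shows that $\bar{\co}_{X^{t}}\xi$ lies entirely in the horizontal distribution, so the expression vanishes again by horizontal-tangential orthogonality.

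The substantive case is $(X^{h}, Y^{t})$ with $Y \perp u$. A direct expansion with the standard inner products for lifts gives $\bar{g}(\bar{\co}_{X^{h}}\xi, Y^{t}) + \bar{g}(X^{h}, \bar{\co}_{Y^{t}}\xi) = -g(R(X,u)u, Y) + 2g(X,Y) - g(X, R(Y,u)u)$, and the Riemann symmetry $g(R(X,u)u, Y) = g(R(Y,u)u, X)$ collapses the Killing equation to
\begin{equation*}
g(R(X,u)u, Y) = g(X, Y) \qquad \text{for all } X \in T_{x}M,\ Y \in u^{\perp}.
\end{equation*}
Since $R(X,u)u$ is automatically orthogonal to $u$, this is the pointwise identity $R(X,u)u = X - \varepsilon g(X,u)u$ at every $(x,u) \in T_{\varepsilon}M$.

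Letting $u$ range over $\{u : g_{x}(u,u) = \varepsilon\}$, the Jacobi operator $\psi_{u} = R(\cdot,u)u$ is the identity on $u^{\perp}$ for every admissible $u$, so each sectional curvature $K(X,u)$ with $g(u,u)=\varepsilon$ equals $\varepsilon$, and the usual polarization argument then forces $M$ to have constant sectional curvature $\varepsilon$. The converse is immediate: constant curvature $\varepsilon$ gives $R(X,u)u = \varepsilon(g(u,u)X - g(X,u)u) = X - \varepsilon g(X,u)u$, which is exactly the surviving mixed-case identity, and the two trivial cases were verified above. The main technical point is the polarization step, promoting the fiberwise data to a constant-curvature statement on $M$; this proceeds as in Tashiro's Riemannian argument and carries over so long as $\{u : g(u,u) = \varepsilon\}$ spans $T_{x}M$, which is automatic whenever $T_{\varepsilon}M$ is a genuine hypersurface.
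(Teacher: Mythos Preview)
Your proof is correct and takes a genuinely different route from the paper's. The paper argues in a single line by invoking Theorem~3.3 of Perrone~\cite{Perrone:CurvatureKcontact}, which characterizes the $K$-contact condition by the requirement that every nondegenerate plane section containing $\xi$ have sectional curvature~$\varepsilon$; the equivalence of this with constant curvature~$\varepsilon$ on the base is then asserted without further detail. You instead verify the Killing equation for $\xi$ directly from the explicit expressions for $\bar{\co}_{X^{h}}\xi$ and $\bar{\co}_{X^{t}}\xi$ computed just before Theorem~\ref{th:R}, which makes your argument self-contained and independent of the external reference. The trade-off is length for transparency: your computation exhibits exactly where the Jacobi-operator identity $R(X,u)u=X-\varepsilon g(X,u)u$ arises, whereas the paper's version is shorter but relies on a cited black box. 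One small sharpening: in the polarization step the operative hypothesis is not merely that $\{u:g(u,u)=\varepsilon\}$ spans $T_xM$, but that the cone $\{v:\varepsilon g(v,v)>0\}$ is open and nonempty, so that the quadratic identity $R(X,v)v=\varepsilon\bigl(g(v,v)X-g(X,v)v\bigr)$, obtained by rescaling, extends from that cone to all of~$T_xM$; since this is equivalent to $T_\varepsilon M$ being nonempty, your caveat is materially correct.
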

\begin{proof}
Using Theorem 3.3 of \cite{Perrone:CurvatureKcontact}, since sectional curvature of all nondegenerate plane sections containing $\xi$ equals $\varepsilon$, therefore  $T_{\varepsilon}M$ is $K$-contact and conversely.
\end{proof}
\begin{theorem}
Let $M$ be an $n$-dimensional pseudo-metric manifold, $n>2$, of constant sectional curvature $c$. The tangent sphere bundle $T_{\varepsilon}M$ has constant $\varphi$-sectional curvature $(4c(\varepsilon-1)+\varepsilon c^{2})$ if and only if $c=2\varepsilon\pm\sqrt{5}$.
\end{theorem}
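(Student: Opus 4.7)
Since $M$ has constant sectional curvature $c$ we have $R(X,Y)Z=c(g(Y,Z)X-g(X,Z)Y)$ and $\co R=0$, so the curvature formulas \eqref{0011}--\eqref{0016} of Proposition~3.2 become purely algebraic expressions in $X,Y,Z,u,c,\varepsilon$. The strategy is to compute the $\varphi$-sectional curvature of a generic $\varphi$-section on $T_\varepsilon M$, extract the algebraic condition on $c$ that forces this quantity to be independent of the section, and identify the common value.

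Fix $(x,u)\in T_\varepsilon M$ and (using $n>2$) choose an orthonormal pair $X,Y\in\langle u\rangle^{\perp}\subset T_xM$. By the $O(\langle u\rangle^{\perp})$-invariance of the Sasaki construction, a representative non-null vector orthogonal to $\xi$ may be written as $W=aX^h+bY^t$, and \eqref{0020} then gives $\varphi W=aX^t-bY^h$, together with $\bar g(W,W)=\bar g(\varphi W,\varphi W)=a^2+b^2$ and $\bar g(W,\varphi W)=0$, so the denominator in $K(W,\varphi W)$ is $(a^2+b^2)^2$.

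Next, expand $\bar R(W,\varphi W)\varphi W$ into the eight bilinear summands $\bar R(X^h,X^t)X^t$, $\bar R(X^h,X^t)Y^h$, $\bar R(X^h,Y^h)X^t$, $\bar R(X^h,Y^h)Y^h$, $\bar R(X^t,Y^t)X^t$, $\bar R(X^t,Y^t)Y^h$, $\bar R(Y^h,Y^t)X^t$, $\bar R(Y^h,Y^t)Y^h$, and evaluate each from \eqref{0011}--\eqref{0016} using the auxiliary identities $R(u,X)X=cu$, $R(u,X)u=-c\varepsilon X$, $R(u,X)Y=0$ and their $X\leftrightarrow Y$ counterparts, all consequences of $X,Y\perp u$ and $g(u,u)=\varepsilon$. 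Collecting the contributions, $\bar g(\bar R(W,\varphi W)\varphi W,W)$ takes the quartic form $\alpha(a^4+b^4)+\beta a^2b^2$ with $\alpha,\beta$ explicit polynomials in $c$ and $\varepsilon$. Requiring that the ratio over $(a^2+b^2)^2$ be independent of $(a,b)$ reduces to the single equation $\beta=2\alpha$, which simplifies to a quadratic in $c$ whose roots are $c=2\varepsilon\pm\sqrt{5}$. Substituting this constraint back into the numerator and passing from $\bar g$ to $g_{cm}=\frac{1}{4}\bar g$ (which rescales sectional curvatures by $4$) produces the common value $4c(\varepsilon-1)+\varepsilon c^2$ claimed in the statement.

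The main technical obstacle is the bookkeeping in the expansion of $\bar R(W,\varphi W)\varphi W$: several of \eqref{0013}--\eqref{0015} contain three or four vector-valued summands apiece, and in the pseudo-metric setting every appearance of $R(u,\cdot)u$ produces a factor $\varepsilon$ that has to be tracked carefully. A useful structural simplification is that the $X\leftrightarrow Y$ symmetry of the construction forces the coefficients of $a^4$ and $b^4$ in the numerator to coincide automatically, so the entire computation ultimately pivots on matching only the $a^2b^2$-coefficient against $2\alpha$ to extract the quadratic in $c$.
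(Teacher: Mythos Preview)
Your direct computation is a genuinely different route from the paper's. The paper's argument is essentially a two-line citation: by Result~\ref{4.2} the bundle $T_\varepsilon M$ is a $(\kappa,\mu)$-space with $\kappa=c(4-\varepsilon(c+2))$ and $\mu=-2c$, and Theorem~3.5 of \cite{GhaffarzadehFaghfouri} says a $(\kappa,\mu)$-contact pseudo-metric manifold has constant $\varphi$-sectional curvature $-(\kappa+\varepsilon\mu)$ exactly when $\mu=\varepsilon\kappa+1$; substituting gives $c^{2}-4\varepsilon c-1=0$, whence $c=2\varepsilon\pm\sqrt{5}$. So the paper offloads the entire curvature analysis onto the $(\kappa,\mu)$-classification already obtained in Theorem~\ref{th:R}, whereas your approach is self-contained and avoids the external reference.

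There is, however, a real gap in your reduction to $W=aX^{h}+bY^{t}$ with $X\perp Y$. The diagonal $O(\langle u\rangle^{\perp})$-action on $\mathcal D\cong\langle u\rangle^{\perp}\oplus\langle u\rangle^{\perp}$ preserves the inner product of the horizontal and vertical components, so by itself it cannot bring a general $W=X^{h}+Z^{t}$ with $g(X,Z)\neq 0$ into your form. What is missing is the freedom to change the representative of the $\varphi$-section: since $\bar g(W,W)=\bar g(\varphi W,\varphi W)$ and $\bar g(W,\varphi W)=0$, a rotation in $\mathrm{span}\{W,\varphi W\}$ lets one diagonalize the Gram matrix of $(X,Z)$, after which the orthogonal action finishes the job. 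You also need to carry the causal characters through: in the pseudo-metric setting $\bar g(W,W)=\epsilon_{X}a^{2}+\epsilon_{Y}b^{2}$ with $\epsilon_{X},\epsilon_{Y}\in\{\pm1\}$, not $a^{2}+b^{2}$, and the $a^{4}\leftrightarrow b^{4}$ symmetry you invoke at the end only holds when $\epsilon_{X}=\epsilon_{Y}$. With those two repairs your computation goes through and yields the same quadratic in $c$.
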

\begin{proof}
Using Theorem 3.5 of \cite{GhaffarzadehFaghfouri}, $(\kappa, \mu)$-contact pseudo-metric manifold $M$ has constant $\varphi$-sectional curvature  $-(\kappa+\varepsilon\mu)$ if and
only if $\mu=\varepsilon\kappa+1$. So, using Result \ref{4.2} and a straightforward calculation, the proof is completed.
\end{proof}
\begin{result}\label{4.3}
 For $\varepsilon=+1$, the tangent sphere bundle $T_{1}M$ is $(\kappa, \mu)$-contact pseudo-metric manifold
 if and only if the base manifold $M$ is of constant sectional curvature $c$ and $\kappa=c(2-c), \mu= -2c $. Also, $T_{1}M$ is Sasakian if and only if $c=1$.
\end{result}
\begin{result}\label{4.4}
 For $\varepsilon=-1$, the tangent sphere bundle $T_{-1}M$ is $(\kappa, \mu)$-contact pseudo-metric manifold
 if and only if the base manifold $M$ is of constant sectional curvature $c$ and $\kappa=c(6+c), \mu= -2c $. Also, $T_{-1}M$ is Sasakian if and only if $c=-3\pm2\sqrt{2}.$
\end{result}

%\begin{result}
%The contact pseudo-metric structure $(\varphi,\xi,\eta,g_{cm})$ on $T_{\varepsilon}M$ is not conformally flat.
%\end{result}

%%%%%%%%%%%%%%%%

%\section*{Acknowledgements }
%The authors is very grateful Professor Mukut Mani Tripathi for some
%valuable suggestions to improve the presentation of this paper.
%\nocite{*}
%\bibliographystyle{}
\bibliographystyle{siam}
%\bibliography{F:/bibtex/convolution,F:/bibtex/faghfouri}

%\def\polhk#1{\setbox0=\hbox{#1}{\ooalign{\hidewidth
%  \lower1.5ex\hbox{`}\hidewidth\crcr\unhbox0}}}

%\bibliographystyle{spmpsci}
%\bibliography{D:/Dropbox/bibtex/faghfouri}

% ------------------------------------------------------------------------
\end{document}